\documentclass{amsart}
\usepackage[utf8]{inputenc}
\usepackage{amssymb, tipa}
\usepackage{graphicx}
\usepackage{xcolor}
\usepackage{tikz}
\usepackage{tikz-cd}
\usepackage{csquotes}
\usepackage[english]{babel}
\usepackage[fixlanguage, datename]{babelbib}
\usepackage{hyperref}
\usepackage{amsthm}
\usepackage{amsfonts}
\usepackage{mathtools}
\usepackage{stmaryrd}
\usepackage{MnSymbol}

\newtheorem{theorem}{Theorem}[section]

\newtheorem{corollary}[theorem]{Corollary}
\newtheorem{fact}[theorem]{Fact}

\theoremstyle{definition}
\newtheorem{definition}[theorem]{Definition}

\newtheorem{question}[theorem]{Question}

\newtheorem*{main}{Main Theorem}

\def\R{\mathbb{R}}

\def\N{\mathbb{N}}
\newcommand{\FF}{\mathrm{fields}}

\newcommand{\cL}{\mathcal{L}}
\newcommand{\dcl}{\mathrm{dcl}}
\newcommand{\acl}{\mathrm{acl}}

\newcommand{\alg}{\mathrm{alg}}
\newcommand{\ld}{\mathrm{ld}}
\newcommand{\ACF}{\mathrm{ACF}}
\newcommand{\ACVF}{\mathrm{ACVF}}
\newcommand{\RCF}{\mathrm{RCF}}

\newcommand{\M}{\mathbb M}
\newcommand{\A}{\mathbb A}

\newcommand{\ldfree}[1]{{\underset{#1}{\downfree^{\ld}}}}
\newcommand{\aclfree}[1]{{\underset{#1}{\downfree^{\acl}}}}
\newcommand{\algfree}[1]{{\underset{#1}{\downfree^{\alg}}}}

\DeclareMathOperator{\tp}{tp}

\title{Quantifier elimination for lovely pairs of strongly geometric fields}

\author[Pablo Cubides Kovascics]{Pablo Cubides Kovacsics}
\address{Pablo Cubides Kovacsics, Departamento de Matemáticas, Universidad de los Andes, 
Carrera 1 \# 18A - 12, 
111711, Bogot\'a, Colombia}
\email{p.cubideskovacsics@uniandes.edu.co}
\thanks{The first author was supported by the research grant FAPA (INV-2023-158-3157) \emph{New methods in non-archimedean geometry and its applications}, funded by Universidad de los Andes.}

\author[Felipe Estrada]{Felipe Estrada}
\address{Felipe Estrada, Departamento de Matemáticas, Universidad de los Andes, 
Carrera 1 \# 18A - 12, 
111711, Bogot\'a, Colombia}
\email{f.estrada@uniandes.edu.co}
\thanks{}

\author[Juan Pérez]{Juan Pérez}
\address{Juan Pérez, Departement de mathématiques, UMONS, Place du parc 20, Mons, Belgium}
\email{260033@umons.ac.be}
\thanks{The third author was supported by Institute Complexys at UMONS}

\author[David Rincón]{David Rincón}
\address{David Rincón, The University of Manchester, School of Mathematics, Oxford Road, Manchester
M13 9PL, UK}
\email{david.rincon@postgrad.manchester.ac.uk}
\thanks{}

\subjclass[2020]{03C60, 12L12}
\keywords{Geometric fields, very slim fields, lovely pairs.}

\begin{document}

\maketitle

\begin{abstract}
Let \(T\) be a complete strongly geometric theory of fields with quantifier elimination. We show that the theory of lovely pairs of $T$ has quantifier elimination in  Delon’s definitional expansion by predicates for linear independence and function symbols for the corresponding coordinate functions. Apart from recovering Delon's original results for pairs of algebraically closed fields and dense pairs of algebraically closed valued fields, we obtain as particular cases, quantifier elimination for theories of dense pairs of real closed and $p$-adically closed fields.
\end{abstract}

\

The study of pairs of models of a complete theory is a classical topic in model theory, dating back to results of A. Robinson \cite{Robinson} on pairs of algebraically closed and real closed fields. In the present text we study pairs of \emph{strongly geometric fields}, that is, expansions of fields for which, in all elementary extensions, the model-theoretic algebraic closure coincides with the field-theoretic algebraic closure (also called \emph{very slim} by M. Junker and J. Koenigsmann in \cite{JK}). W. Johnson and J. Ye proved in \cite{johnson-ye}, that the theory of any such field is a geometric theory, in the sense that it eliminates the quantifier $\exists^\infty$ and the model-theoretic algebraic closure has the exchange property. Examples of strongly geometric fields include algebraically closed and real closed fields, perfect PAC-fields and henselian valued fields of characteristic 0. 

The theory of pairs of strongly geometric fields we are interested in is the theory of their \emph{lovely pairs}, a notion introduced by A. Berenstein and E. Vassiliev in \cite{beren-vassi} encompassing various well-studied theories of pairs. Their definition will be recalled in Section 2. Examples include classical theories such as the theories of pairs of algebraically closed fields, dense pairs of real closed (resp. $p$-adically closed) fields and dense pairs of algebraically closed valued fields. 

In \cite{delon-pairs}, F. Delon proved  that after adding predicates for linear independence and function symbols for the associated coordinate functions, pairs of algebraically closed fields and dense pairs of algebraically closed valued fields admit quantifier elimination. We will call such a definitional expansion \emph{Delon's expansion} (its formal definition will be given in Section 2). It turns out that the theories of pairs studied by Delon are precisely theories of lovely pairs. Our main theorem generalizes Delon's results by showing that if $T$ is the theory of a strongly geometric field with quantifier elimination, then the corresponding theory of lovely pairs of models of $T$ has quantifier elimination in Delon's expansion. In light of our result, one can conclude that the heart of Delon's quantifier elimination theorems lies precisely in the (strongly) geometric nature of the underlying theory of fields. As other particular cases, we obtain quantifier elimination for the theories of dense real closed (resp. $p$-adically closed) fields, and lovely pairs of henselian valued fields of characteristic 0 such as $\R(\hspace{-.075cm}(t)\hspace{-.075cm})$ and $\mathbb{C}(\hspace{-.075cm}(t)\hspace{-.075cm})$, viewed as structures in Macintyre's language after addition of constants (see later Corollary \ref{cor:geoFields}).

The article is laid out as follows. In Section 1 we present the setting, recall the main definitions involved and gather a couple of algebraic lemmas which will be later used. Finally, in Section 2 we prove our main theorem and derive some consequences of it.  

\section{Setting and preliminaries}

\subsection{Lovely pairs of strongly geometric fields}

For convenience, we will work in the language of fields $\cL_{\FF}=\{+,-,\cdot, ^{-1},0,1\}$ (instead of the usual language of rings). For $\cL\supseteq\cL_\FF$, let $T$ be the $\cL$-theory of a strongly geometric field. Recall that this means that in any model of $T$, the model-theoretic algebraic closure coincides with the field-theoretic algebraic closure. By \cite[Theorem 2.5]{johnson-ye}, this implies that $T$ is geometric.\footnote{Note that there are expansions of fields which are not strongly geometric but do have an underlying geometric theory as, for example, $\mathrm{Th}(\R,+,\cdot,\exp)$. Surprisingly, \cite[Example 4.3]{johnson-yeII} shows that there are pure fields which are not strongly geometric but have an underlying geometric theory.}   

\

Lovely pairs of geometric structures (i.e., their theory is geometric) were introduced by A. Berenstein and E. Vassiliev in \cite{beren-vassi}. We recall in this section their definition and their main properties. 

Let $T$ be a complete geometric $\cL$-theory and $M$ be a model of $T$. For $A, B, C\subseteq M$ we write $C\aclfree{A} B$ whenever if $C$ is $\acl$-independent over $A$, it remains $\acl$-independent over $A\cup B$. Given a type $q \in S(B)$,
we say that $q$ is \emph{free over $A$} if $c\aclfree{A} B$ for each $c\models q$.   

We let $\cL_P$ be the language $\cL$ extended by a unary predicate $P$. By a pair of $\cL$-structures we mean an $\cL_P$-structure $\M=(M, P_M)$ such that $P_M\leqslant_{\cL} M$. For $A\leqslant_{\cL} M$ we let $P_A=P_M\cap A$ and write $\A$ for $(A,P_A)$. Let $T'$ be the $\cL_P$-theory of pairs $(M,P_M)$ such that $M\models T$ and $\acl_\cL(P_M)=P_M$. 

\begin{definition}\label{def:lovelyPair} 
Let $\kappa$ be a cardinal greater than $|T|$.
A pair of $\cL$-structures $(M, P_M)$ is a \emph{$\kappa$-lovely pair of models of $T$} if
\begin{enumerate}
    \item $(M,P_M)\models T'$; 
    \item (Coheir property) If $A\subseteq M$ with $|A|<\kappa$ and $q \in S_1(A)$ is free over $P_A$, there is $a \in P_M$ such that $a\models q$;
\item  (Extension property) If $A\subseteq M$ with $|A|<\kappa$ and $q\in S_1(A)$, there is
$a \in M$ such that $a \models q$ and $\tp_{\cL}(a/A\cup P_M)$ is free over $A$.
\end{enumerate}
A pair of $\cL$-structures $(M, P_M)$ is a lovely pair if it is $\kappa$-lovely pair for some cardinal $\kappa>|T|$. 
\end{definition}

That the previous definition is equivalent to \cite[Definition 2.3]{beren-vassi}, follows from the discussion after \cite[Lemma 2.4]{beren-vassi}. The following summarizes the main results from \cite{beren-vassi} that we will need: 

\begin{theorem}[{\cite[Lemma 2.5, Corollary 2.9 and Theorem 2.10]{beren-vassi}}]\label{thm:lovely-pairs} For any cardinal $\kappa>|T|$, the $\kappa$-lovely pairs of models of $T$ exist and they are elementary pairs. Their common theory $T_P$ is complete. In addition, if $(M, P_M)$ is a $\kappa$-saturated model of $T_P$ for a cardinal $\kappa>|T|$, then $(M, P_M)$ is a $\kappa$-lovely pair.
\end{theorem}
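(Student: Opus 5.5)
This theorem is cited from Berenstein–Vassiliev, so the plan is to reconstruct their argument.

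\textbf{Existence.} Start from any model $M_0\models T$, taken with $P_{M_0}=M_0$, or with $P$ the algebraic closure of the empty set, which is $\acl$-closed. Then build a continuous elementary chain of pairs $(M_i,P_i)$ of length $\kappa^+$, or of length $\kappa$ with $\kappa$ regular after adjusting $\kappa$. At each successor step, realise in an elementary extension all coheir requirements and all extension requirements with parameters from $M_i$ of size $<\kappa$.

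For a coheir requirement with $q\in S_1(A)$ free over $P_A$, take $a$ realising $q$ with $a\notin\acl(M_i)$ if $q$ is non-algebraic, and put $a$ into $P$. If $q$ is algebraic, freeness over $P_A$ forces $q$ to be algebraic over $P_A$, hence realised in $P$ already. For an extension requirement, realise $q$ by $a$ with $a\notin\acl(M_i)$, keeping $P$ unchanged. The exchange property ensures that $\acl$-closedness of $P$ and the freeness conditions are preserved along the chain. At limits take unions; by regularity every small $A$ appears at some earlier stage.

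\textbf{Elementary pairs.} Show $P_M\preceq M$ by the Tarski–Vaught test. Let $\varphi(x,\bar b)$ with $\bar b\in P_M$ be consistent in $M$. If $\varphi$ has finitely many solutions, they lie in $\acl(P_M)=P_M$. Otherwise choose a non-algebraic type $q$ over $\bar b$ containing $\varphi$; this is possible since $T$ eliminates $\exists^\infty$. Such a $q$ is free over $P_{\bar b}=\bar b$, so the coheir property gives a realisation in $P_M$.

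\textbf{Completeness.} Use a back-and-forth between two $\kappa$-lovely pairs $M,N$. The family consists of partial $\cL$-elementary maps $f\colon A\to B$ with $|A|<\kappa$, where $A$ is $P$-independent, meaning $A\aclfree{P_A}P_M$, similarly for $B$, and $f(P_A)=P_B$. To add an element $c$, first enlarge $A$ to a $P$-independent set by adding a small $\acl$-basis of $P_M$ over which $Ac$ is independent from $P_M$; this uses that $\acl$-dimension is finite for finite tuples. Each new $P$-element has a type free over the $P$-part, and the coheir property realises its image in $P_N$. The remaining element, if not in $\acl(A\cup P_M)$, is matched in $N$ by the extension property. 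The base case $\emptyset$ is $P$-independent because $T$ is complete, so $M\equiv N$ in $\cL_P$.

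\textbf{Saturated models.} Let $(M,P_M)\models T_P$ be $\kappa$-saturated. Both lovely conditions, restricted to types over $A$ with $|A|<\kappa$, reduce to the consistency of partial $\cL_P$-types over $A$. For the extension property, the type asserts $q(x)$ together with $\neg\varphi(x,\bar a,\bar p)$ for every $\cL$-formula $\varphi$ such that $\varphi(x,\bar a,\bar y)$ is algebraic in $x$, where $\bar p$ ranges over $P$. Expressibility uses the uniform finiteness bound from elimination of $\exists^\infty$. Finite satisfiability holds in a $\kappa$-lovely pair, which is a model of $T_P$ by completeness, so $\kappa$-saturation realises the type. The coheir property is handled the same way.

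The main obstacle is the back-and-forth step: maintaining $P$-independence while extending maps requires care with exchange and finite dimension.
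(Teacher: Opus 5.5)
The paper gives no proof of its own; it only cites Berenstein--Vassiliev. Your outline reconstructs essentially their argument: a chain construction for existence, Tarski--Vaught via the coheir property, back-and-forth over small $P$-independent sets for completeness, and first-order coheir/extension schemes via elimination of $\exists^\infty$ for the saturated case. It is correct in outline, and the points you leave implicit are routine: the new realizations at each stage of the chain should be taken jointly $\acl$-independent over $M_i$, and in the forth step the case $c\in\acl(A\bar e)$ is handled by any realization $c'$ of the transported algebraic type, with $c\in P_M$ iff $c'\in P_N$ by $P$-independence of $A\bar e$ and $B\bar e'$.
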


In what follows we will assume that $T$ is a complete strongly geometric $\cL$-theory of fields. We will study the corresponding theory of lovely pairs. 

\subsection{Delon's expansion}\label{sec:delon}

We extend $\cL_P$ to the language $\cL_D$, \emph{Delon's expansion}, defined by 
\[
\cL_D=\cL_P\cup \{\ell_n : n\in \N_{\geqslant 1}\} \cup \{f_{n,i} : n\in \N_{\geqslant 1}, 1\leqslant i\leqslant n\}
\]
where $\ell_n$ is a predicate symbol of arity $n$ and $f_{n,i}$ is a function symbol of arity $n+1$. Let $T$ be an $\cL$-theory of fields and let $T_2$ be any $\cL_P$-theory of pairs $\M=(M,P_M)$ such that $ M\models T$ and $P_M \leqslant_\cL M$. We let $T_2(D)$ be the $\cL_D$-theory $T_2$ together with axioms capturing in any model $(M,P_M)\models T_2(D)$ that
\begin{align*}
& \ell_n(x_1,\ldots, x_n) \Leftrightarrow x_1, 
 \ldots, x_n \text{ are linearly independent over $P_M$} \\
& f_{n,i}(y,x_1,\ldots, x_n)=
\begin{cases}
z_i & \text{ if  $\ell_n(x_1,\ldots, x_n) \wedge y=\sum_{i=1}^n z_ix_i \wedge \bigwedge_{i=1}^n P(z_i)$ } \\
0 & \text{ otherwise.}
  \end{cases} 
\end{align*}

\subsection{Auxiliary results}

\begin{definition}\label{def:1} Let $k\subseteq K, L$ be fields all contained in some bigger field. 
\begin{enumerate}
    \item The field $K$ is \emph{linearly disjoint from $L$ over $k$}, denoted by $K\ldfree{k} L$, if whenever a subset $A$ of $K$ is $k$-linearly independent, it is also $L$-linearly independent. 
    \item The field $K$ is \emph{free from $L$ over $k$}, denoted by $K\algfree{k} L$, if whenever a subset $A$ of $K$ is $k$-algebraically independent, it is also $L$-algebraically independent.
    \item The extension $k\subseteq K$ is regular if and only if the extension is separable and $k^\alg\cap K = k$ (in particular, every elementary extension of fields is regular).   
\end{enumerate}
\end{definition}

We recall some facts about linear disjointness that we will be later used (see \cite{Lang}): 

\begin{fact}\label{fact.disln} Let $k\subseteq K, L$, and $F$ be fields all contained in some bigger field. 
\begin{enumerate}
    \item The relation $\ldfree{k}$ is symmetric.
    \item (Transitivity) $K \ldfree{k} L$ if and only if $K \ldfree{k} F$ and $KF \ldfree{F} L$, where $k \subseteq F\subseteq L$.  
    \item $K \ldfree{k} L$ implies $K \algfree{k} L$.
    \item A field extension $K\subseteq L$ is \emph{separable} if and only if $K\ldfree{K^p} L^p$.
    \item If $K \ldfree{k} L$ and $k \subseteq K$ is regular, then $L\subseteq LK$ is regular.
\end{enumerate}
\end{fact}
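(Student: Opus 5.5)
The plan is to reduce everything to finite linear algebra over $k$. Each item is classical (Lang, \emph{Algebra}, Ch.~VIII; Mac Lane's criterion), so I will only recall the arguments briefly and do the algebra inside a fixed big field $\Omega$. I will use the following reformulation throughout.

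\emph{Reformulation.} $K\ldfree{k}L$ holds if and only if, for some (equivalently every) $k$-basis $(a_i)$ of $K$ and $k$-basis $(b_j)$ of $L$, the products $a_ib_j$ are $k$-linearly independent.
\begin{itemize}
\item[$(\Rightarrow)$] A relation $\sum c_{ij}a_ib_j=0$ can be rewritten as $\sum_i\bigl(\sum_j c_{ij}b_j\bigr)a_i=0$, which is an $L$-relation among the $a_i$.
\item[$(\Leftarrow)$] Given $\sum_i l_ia_i=0$ with $l_i\in L$, expand each $l_i$ in a finite $k$-basis $(b_j)$ of $\mathrm{span}_k(l_i)$. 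Independence of the products gives all $c_{ij}=0$, hence all $l_i=0$.
\end{itemize}
Since this condition is symmetric in $K$ and $L$, item (1) follows at once.

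For (2), fix a $k$-basis $(a_i)$ of $K$, an $F$-basis $(c_s)$ of $L$, and a $k$-basis $(f_r)$ of $F$. Then $(f_rc_s)$ is a $k$-basis of $L$. The condition $K\ldfree{k}L$ says the $a_if_rc_s$ are $k$-independent. This splits into two conditions:
\begin{itemize}
\item the $a_if_r$ are $k$-independent, which is $K\ldfree{k}F$;
\item the $a_i$ remain independent when the $c_s$ are adjoined over $F$. Since the $a_i$ span $KF$ over $F$ under $K\ldfree{k}F$, this is $KF\ldfree{F}L$.
\end{itemize}
Chasing coefficients through this splitting gives both directions.

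For (3), let $x_1,\dots,x_n\in K$ be algebraically independent over $k$. The monomials in the $x$'s are then $k$-linearly independent elements of $K$. By linear disjointness they stay $L$-linearly independent, so no nonzero polynomial over $L$ vanishes at $(x_1,\dots,x_n)$.

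For (4), I would invoke Mac Lane's criterion: $L/K$ is separable if and only if $L\ldfree{K}K^{1/p}$. Applying the Frobenius isomorphism $x\mapsto x^p$ transports this to $L^p\ldfree{K^p}K$, and symmetry (1) then gives the stated form.

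For (5), I expect this to be the main obstacle. I will use the characterization that $k\subseteq K$ is regular if and only if $K\ldfree{k}k^{\alg}$, which follows from (4) combined with $k^{\alg}\cap K=k$. The aim is to show $KL\ldfree{L}L^{\alg}$.
\begin{enumerate}
\item By (2) and (3), it suffices to show $K\ldfree{k}L^{\alg}$. Transitivity then gives $KL\ldfree{L}L^{\alg}$, since $k\subseteq L\subseteq L^{\alg}$.
\item To get $K\ldfree{k}L^{\alg}$, I would reduce to finitely generated subextensions. Choose a transcendence basis $t$ of $L^{\alg}$ over $k^{\alg}$ inside $L$. Since $K\ldfree{k}L$, by (3) $K$ is free from $k(t)$ over $k$.
\item Regularity of $K/k$ lets one pass from $k$ to $k^{\alg}(t)$: the field $K(t)$ is regular over $k(t)$, because $t$ consists of independent transcendentals over $K$.
\item It then remains to handle an algebraic extension, which is the case $K\ldfree{k}k^{\alg}$ base-changed to $k(t)$. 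This is exactly Lang's theorem that a regular extension is linearly disjoint from any extension free from it.
\end{enumerate}
The bookkeeping in the last two steps is where I expect the work to concentrate; the other items are routine coefficient chasing.
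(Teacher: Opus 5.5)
Your arguments for (1)--(4) are the standard ones from Lang. The paper gives no proof of this Fact beyond that reference, so these items are fine, up to two small repairs. In (2), the $F$-span of your basis $(a_i)$ is the ring $F[K]$, not the field $KF$, so you need the remark that disjointness of $KF$ from $L$ over $F$ can be tested on $F[K]$ after clearing denominators. In (5), the characterization ``$k\subseteq K$ regular iff $K\ldfree{k}k^{\alg}$'' needs more than (4) and $k^{\alg}\cap K=k$: you must also show that the minimal polynomial over $k$ of a separable algebraic element stays irreducible over $K$ (the coefficients of its monic factors are algebraic over $k$ and lie in $K$, hence in $k$), and then combine the separable and purely inseparable parts via (2).

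The genuine gap is in (5), and it sits in step (iii), not (iv). From ``$t$ is algebraically independent over $K$'' and $K\ldfree{k}k^{\alg}$ you only get $K\ldfree{k}k(t)$ and, by base change, $K(t)\ldfree{k(t)}k^{\alg}(t)$. Regularity of $K(t)/k(t)$, however, means disjointness from $k(t)^{\alg}$, which is much larger than $k^{\alg}(t)$. In fact ``$K(t)/k(t)$ is regular'' is exactly (5) for $L=k(t)$, so your justification assumes a case of what you are proving.

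The missing ingredient is the lemma that relative algebraic closedness survives adjoining independent transcendentals. Suppose $f=g/h\in K(t)$, with $g,h\in K[t]$ coprime and $h$ monic, satisfies $\sum_{i\le n}p_if^i=0$ with $p_i\in k[t]$ and $p_n\neq0$. Then $h\mid p_n$, so the coefficients of $h$ are algebraic over $k$ and lie in $K$, hence in $k$. Applying this to $1/g$ gives $g\in K^\times\cdot k[t]$. An element of $K$ algebraic over $k(t)$ is algebraic over $k$, since $t$ is transcendental over $K$, so $f\in k(t)$ (treat one variable at a time). Separability of $K(t)/k(t)$ follows from Mac Lane's criterion, since $k(t)^{1/p}=k^{1/p}(t^{1/p})$ and the monomials $t^{m/p}$ with $0\le m_i<p$ stay independent over $Kk^{1/p}(t)$.

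With (iii) proved this way, (iv) is formal: $K(t)\ldfree{k(t)}k(t)^{\alg}\supseteq L^{\alg}$, and (2) gives $K\ldfree{k}L^{\alg}$. Your description of (iv) as ``$K\ldfree{k}k^{\alg}$ base-changed to $k(t)$'' is inaccurate, since that only yields disjointness from $k^{\alg}(t)$.

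Alternatively, you can simply cite Lang's theorem that a regular extension is linearly disjoint from every extension free from it. Then (ii) and (iii) are unnecessary. By (3), $K$ is free from $L$, hence from $L^{\alg}$, so the theorem gives $K\ldfree{k}L^{\alg}$ at once. Then (2) with $F=L$ gives $KL\ldfree{L}L^{\alg}$, i.e.\ $KL/L$ is regular. That one-line citation is exactly the paper's treatment.
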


The following fact summarizes various results in \cite{delon-pairs}.

\begin{fact}[Delon]\label{fact:substructure}
Let $\M=(M, P_M)$ and $\M'=(M',P_{M'})$ be pairs of $\cL$-structures (i.e., $P_M\leqslant_{\cL} M$). 
\begin{enumerate}
\item Let $A\leqslant_{\cL} M$. Then 
\[
\A\leqslant_{\cL_D} \M \Leftrightarrow A \ldfree{P_A} P_M
\]
\item Let $A\subseteq M$ be such that $P_A$ is an $\cL$-substructure of $P_M$, and let $\sigma\colon A\to M'$ be a partial $\cL$-isomorphism. Let $A'=\sigma(A)$. Then $\sigma$ is a partial $\cL_D$-isomorphism if and only if
\[
\sigma(P_A)=P_{A'} \hspace{1cm} A  \ldfree{P_A} P_M \hspace{1cm} A' \ldfree{P_{A'}} P_{M'}. 
\]
\end{enumerate}
\end{fact}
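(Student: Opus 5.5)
Part (1) and part (2) are proved separately, each by unwinding the definitions of $\ell_n$ and $f_{n,i}$ in Delon's expansion.

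\emph{Part (1).} For $(\Rightarrow)$, assume $\A\leqslant_{\cL_D}\M$, and let $a_1,\ldots,a_n\in A$ be linearly independent over $P_A$. We must show they remain independent over $P_M$. Suppose not. Choose a maximal $P_M$-independent subfamily, say $a_1,\ldots,a_m$ after reordering, with $m<n$. Then $\ell_m(a_1,\ldots,a_m)$ holds in $\M$, and $a_n=\sum_i z_ia_i$ with $z_i\in P_M$. Since $\ell_m$ holds, each coefficient is given by a term: $z_i=f_{m,i}(a_n,a_1,\ldots,a_m)$. As $A$ is closed under the $f_{m,i}$, we get $z_i\in A\cap P_M=P_A$. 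This gives a nontrivial $P_A$-linear dependence among $a_1,\ldots,a_m,a_n$, a contradiction.

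For $(\Leftarrow)$, assume $A\ldfree{P_A}P_M$. We check two things.
\begin{itemize}
\item[(i)] For $a\in A$, the predicate $\ell_n$ is interpreted in $A$ as the restriction from $\M$. If this is taken to mean independence over $P_A$, it agrees with independence over $P_M$ precisely by linear disjointness.
\item[(ii)] $A$ is closed under the $f_{n,i}$. Suppose $\ell_n(\bar a)$ holds and $b=\sum z_ia_i$ with $z_i\in P_M$. Extend $a_1,\ldots,a_n$ to a maximal $P_A$-independent family containing $b$ as needed; by disjointness $b$ is then also expressed over $P_A$. More simply: if $a_1,\ldots,a_n,b$ are $P_A$-dependent, they give a relation $b=\sum w_ia_i$ with $w_i\in P_A$. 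Uniqueness of coordinates over $P_M$ then gives $z_i=w_i\in A$. If instead they are $P_A$-independent, then by disjointness they are $P_M$-independent, contradicting $b=\sum z_ia_i$. In the "otherwise" case the value is $0\in A$.
\end{itemize}
Since $A$ is already an $\cL$-substructure (so closed under field operations), it is an $\cL_D$-substructure.

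\emph{Part (2).} For $(\Rightarrow)$, a partial $\cL_D$-isomorphism preserves $P$, so $\sigma(P_A)=P_{A'}$. For the two disjointness conditions, take $a_1,\ldots,a_n\in A$ independent over $P_A$ but dependent over $P_M$. As in part (1), choose a maximal $P_M$-independent subfamily and write the remaining element with coordinates $z_i=f_{m,i}(\ldots)$. The atomic formulas $\ell_m(\bar a)$ and $a_n=\sum_i f_{m,i}(a_n,\bar a)\,a_i$ hold in $\M$, and $\sigma$ must preserve them. The condition that $\sigma$ is defined on these terms should be read as: the values lie in the domain of $\sigma$, i.e., in $A$, and $P$ holds of them. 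That puts them in $P_A$, a contradiction. The condition on $A'$ follows by symmetry, applied to $\sigma^{-1}$.

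For $(\Leftarrow)$, it suffices to show that $\sigma$ preserves all atomic $\cL_D$-formulas. By part (1), $A$ and $A'$ generate $\cL_D$-substructures, namely the fields they generate, which are still disjoint from $P_M$ over $P_A$. Here $\sigma$ extends uniquely as a field isomorphism, and this extension is still an $\cL$-isomorphism. The predicate $\ell_n$ on $A$ coincides with $P_A$-independence, which is expressed by: no nontrivial relation with coefficients in $P_A$. This is preserved because $\sigma$ is a field map with $\sigma(P_A)=P_{A'}$. The $f_{n,i}$ are computed as $P_A$-coordinates, which are likewise preserved. So all $\cL_D$-terms and atomic formulas are preserved.

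The main obstacle is the bookkeeping in (2) when $A$ is not closed under field operations: one must pass to the generated field and check that linear disjointness and the equality $\sigma(P_A)=P_{A'}$ survive. I would first reduce to the case where $A$ is an $\cL$-substructure, hence a field, using that $\sigma$ is a partial $\cL$-isomorphism with $\cL\supseteq\cL_{\FF}$.
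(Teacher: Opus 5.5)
The paper gives no proof of this Fact (it is quoted from Delon), so your argument has to stand on its own. Part (1) is correct. The gap is in part (2), direction ($\Leftarrow$). There you assert that the fields generated by $A$ and $A'$ are ``still disjoint from $P_M$ over $P_A$'', and you propose to reduce to the case $A\leqslant_{\cL}M$. For a mere subset $A$, neither linear disjointness nor the equality $\sigma(P_A)=P_{A'}$ passes to the generated field, and in fact the conclusion fails.

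Here is a counterexample in a pair of models of $\ACF_0$. Take $x\in P_M$ transcendental over $\mathbb{Q}$, $y\in M$ transcendental over $P_M$, and $u\in M$ transcendental over $P_M(y)$. Let $\M'=\M$ and $A=\mathbb{Q}\cup\{y,x/y\}$, and let $\sigma$ fix $\mathbb{Q}$ and send $y\mapsto y$, $x/y\mapsto u/y$.
\begin{itemize}
\item Both pairs are algebraically independent over $\mathbb{Q}$, so $\sigma$ is a partial $\cL$-isomorphism.
\item $P_A=P_{A'}=\mathbb{Q}$.
\item Every $\mathbb{Q}$-independent subset of $A$ or of $A'$ is $P_M$-independent. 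To see this, multiply a relation among $1,y,x/y$ (resp.\ $1,y,u/y$) by $y$ and use that $y$ is transcendental over $P_M$ (resp.\ $u$ over $P_M(y)$).
\end{itemize}
Yet $P(v_1v_2)$ holds at $(y,x/y)$ and fails at $(y,u/y)$. The field generated by $A$ contains $x\in P_M\setminus P_A$, so it is not linearly disjoint from $P_M$ over $P_A$. No bookkeeping can make your reduction work.

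What is needed is a corrected statement, not a cleverer reduction. In (2), take $A\leqslant_{\cL}M$. Equivalently, impose the disjointness hypothesis on the $\cL$-substructure $\langle A\rangle$ generated by $A$; note this need not be the generated field if $\cL$ has extra function symbols. Disjointness then forces $\langle A\rangle\cap P_M=P_A$: any $p$ in the difference makes $\{1,p\}$ independent over $P_A$ but dependent over $P_M$.

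For ($\Rightarrow$), state explicitly, rather than ``should be read as'', that a partial $\cL_D$-isomorphism means an $\cL_D$-isomorphism between $\cL_D$-substructures. If it only meant preservation of quantifier-free $\cL_D$-formulas, the identity map of any subfield $A$ that is not linearly disjoint from $P_M$ over $P_A$ would be a counterexample. An example is $\mathbb{Q}(y,x_1y+x_2)$ with $x_1,x_2\in P_M$ algebraically independent: there $P_A=\mathbb{Q}$, while $1,y,x_1y+x_2$ are $\mathbb{Q}$-independent but $P_M$-dependent.

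Under these readings your argument is complete. Part (2) is then just (1) applied on both sides, plus the fact that a field isomorphism carrying $P_A$ onto $P_{A'}$ preserves $P_A$-linear independence and $P_A$-coordinates. This is also how the paper applies the Fact. In Step 3 the disjointness invoked is that of the compositum $A'C$ over $C$, obtained by transitivity. At the end of Step 5 the domain is the field $M$.
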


\section{Relative quantifier elimination}

Through this section $T$ denotes a complete strongly geometric $\cL$-theory of fields with quantifier elimination. As above, we let $T_P$ be the theory of lovely pairs of models of $T$. Let $T_D=T_P(D)$ as defined in Section \ref{sec:delon}. 

\begin{main}\label{thm:main}
The theory $T_D$ has quantifier elimination. 
\end{main}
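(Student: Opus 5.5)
We use the standard back-and-forth criterion for quantifier elimination. Let $\M=(M,P_M)$ and $\M'=(M',P_{M'})$ be $\kappa$-saturated models of $T_D$ with $\kappa>|T|$. By Theorem \ref{thm:lovely-pairs} both are $\kappa$-lovely pairs. It suffices to show the following: if $\sigma\colon \A\to\A'$ is an $\cL_D$-isomorphism between $\cL_D$-substructures of $\M,\M'$ of size $<\kappa$, and $b\in M$, then $\sigma$ extends to an $\cL_D$-embedding of a substructure containing $b$ into $\M'$. Since $\cL\supseteq\cL_\FF$, $A$ and $A'$ are subfields. By Fact \ref{fact:substructure}(1), $A\ldfree{P_A}P_M$ and $A'\ldfree{P_{A'}}P_{M'}$. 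Since $T$ has quantifier elimination, $\sigma$ is partial $\cL$-elementary. By Fact \ref{fact:substructure}(2), any partial $\cL$-isomorphism $\tau\supseteq\sigma$ with domain $B$ is an $\cL_D$-isomorphism as soon as $\tau(P_B)=P_{\tau(B)}$ and $B$, $\tau(B)$ are linearly disjoint from the respective predicates over $P_B$, $P_{\tau(B)}$.

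The extension is done in two stages. \emph{Stage 1 (closing under $P$).} Let $B$ be the $\cL_D$-substructure generated by $A\cup\{b\}$, of size $<\kappa$. We first adjoin the finitely many needed elements of $P_B$, one at a time. Take $c\in P_M$, adjoin it, and take $A_1=\dcl_\cL(Ac)$. There are two cases.

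In the first case, $c\in\acl(A\cup P_A)$. Linear disjointness forces $c$ to be algebraic over $P_A$, and then $c\in P_A$ because $\acl(P_M)=P_M$ and strong geometricity gives field-theoretic algebraic closure. More carefully: $c\in A^{\alg}\cap P_M$, and regularity from Fact \ref{fact.disln}(5) applied to $A\ldfree{P_A}P_M$ with $P_A\le A$ regular yields $c\in P_A$. So this case is trivial.

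In the second case, $c$ is $\acl$-independent over $A$. Then $\tp_\cL(c/A)$ is free over $P_A$. Its image under $\sigma$ is realized in $P_{M'}$ by the coheir property. We map $c\mapsto c'$ and extend to $\dcl(Ac)$ by quantifier elimination in $T$. We must then verify $P_{A_1}=\dcl(P_Ac)$ and $A_1\ldfree{P_{A_1}}P_M$. This uses transitivity (Fact \ref{fact.disln}(2)) together with the fact that $A(c)\ldfree{P_A(c)}P_M$ follows from $A\ldfree{P_A}P_M$ when $c\in P_M$. Passing from the field $A(c)$ to $\dcl_\cL(Ac)$, which lies inside $A(c)^{\alg}$ since $\dcl\subseteq\acl$ equals field algebraic closure, requires the regularity argument again: we must show $\dcl(Ac)\cap P_M\subseteq \dcl(P_Ac)$ and that linear disjointness is preserved. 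I would argue this via the separable/regular facts (Fact \ref{fact.disln}(4),(5)).

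\emph{Stage 2 (adjoining $b$ with $P$ fixed).} After Stage 1 we may assume the $\cL_D$-closure of $A b$ contributes no new $P$-elements beyond $P_A$. That is, we assume $\dcl_\cL(Ab)\ldfree{P_A}P_M$; the coordinate functions applied to tuples from $Ab$ have landed in $P_A$. In this situation, independence of $b$ from $P_M$ over $A$ holds in the $\acl$ sense, by Fact \ref{fact.disln}(3). Use the extension property in $\M'$ to realize $\sigma(\tp_\cL(b/A))$ by some $b'$ with $\tp(b'/A'P_{M'})$ free over $A'$. Then prove $\dcl(A'b')\ldfree{P_{A'}}P_{M'}$ and $P_{\dcl(A'b')}=P_{A'}$. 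If $b\in\acl(A)$, the element $b'$ is determined up to $\sigma$-conjugacy and this disjointness must be transferred from the $M$ side via the regularity of $A\subseteq A^{\alg}\cap M$-type extensions.

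The main obstacle is exactly this transfer of linear disjointness when $b$ is algebraic over $A$ (equivalently, ensuring that every $\cL$-definable element over $A\cup P$ behaves uniformly). Here I would exploit quantifier elimination for $T$. The statement ``$b_1,\dots,b_n$ are $P$-dependent'' for $b_i\in\dcl(Ab)$ is witnessed by coordinates in $P_A$ after Stage 1. These relations are $\cL$-formulas over $A$ satisfied by $b$, hence transported to $b'$ by $\sigma$. Conversely, dependence on the $M'$ side gives, by finiteness of the conjugates of $b'$ over $A'$ and the fact that the $P_{M'}$-span of $\dcl(A'b')$ is a finite extension, coordinates algebraic over $A'$ and in $P_{M'}$. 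These coordinates lie in $P_{A'}$ by the Stage 1 argument; pulling back gives a contradiction. In the transcendental case, freeness of $b'$ over $A'P_{M'}$ reduces the claim to the algebraic case.
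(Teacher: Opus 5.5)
There is a genuine gap. It sits exactly where the paper spends Steps 2, 3 and 5: elements of $P_M$ that are algebraic over the current domain but do not belong to it.

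In Case 1 of Stage 1 you conclude $c\in P_A$ from $c\in\acl_\cL(A)\cap P_M$, and this is false. In $\RCF$ (or $\ACF_0$) the prime field $\mathbb{Q}$, with $P_{\mathbb{Q}}=\mathbb{Q}$, is an $\cL_D$-substructure of every lovely pair. Yet $\sqrt2\in\acl_\cL(\mathbb{Q})\cap P_M$ while $\sqrt2\notin P_A$. Regularity of $P_A\subseteq A$ only controls elements \emph{of $A$} algebraic over $P_A$, and $c$ need not lie in $A$. So Case 1 is a real extension step. You must send $c$ to a realization of $\sigma(\tp_\cL(c/A))$, which automatically lies in $\acl_\cL(P_{A'})\subseteq P_{M'}$, and then re-verify Fact~\ref{fact:substructure}(2).

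The same phenomenon means Stage 1 does not deliver the hypothesis of Stage 2, because the $\cL_D$-closure of $A\cup\{b\}$ is not its $\dcl_\cL$-closure. Take $A=\mathbb{Q}$ in $\RCF$ and $b$ transcendental over $P_M$. The $\cL_D$-substructure generated is $\mathbb{Q}(b)$ with $P$-part $\mathbb{Q}$, so Stage 1 adds nothing. But $\sqrt2\in\dcl_\cL(Ab)\cap P_M$, so $\dcl_\cL(Ab)$ is not linearly disjoint from $P_M$ over $P_A$.

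Starting from a $\dcl_\cL$-closed $A$ does not help. In $\ACVF_{0,0}$ take $\epsilon\notin P_M$ with $v(\epsilon)>0$ and set $b=\sqrt2+\epsilon$. Then $A=\mathbb{Q}=\dcl_\cL(\emptyset)$, and the $\cL_D$-closure of $A\cup\{b\}$ is $\mathbb{Q}(b)$ with $P$-part $\mathbb{Q}$. However, $\sqrt2$, the unique $x$ with $x^2=2$ and $v(x-b)>0$, lies in $\dcl_\cL(b)\cap P_M$. Now replace $2$ by some $c\in P_M$ with $v(c)=0$ and transcendental residue, and $b$ by $\sqrt c+\epsilon$. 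With $A=\dcl_\cL(b)$ (so $P_A=\mathbb{Q}$) this gives $\sqrt c\in\dcl_\cL(Ac)\cap P_M\setminus\dcl_\cL(P_Ac)$. That refutes the inclusion you propose to prove in Case 2.

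For the same reason, in your last paragraph the coordinates of a dependence on the $M'$ side only land in $\acl_\cL(P_{A'})$, not in $P_{A'}$. That set is outside the domain of $\sigma$, so as stated there is nothing to pull back.

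The missing idea is a valid way to transfer linear disjointness from a field to its $\dcl_\cL$- or $\acl_\cL$-closure. The regularity argument you gesture at runs through Fact~\ref{fact.disln}(5): if the base is regular in $P_M$, then $K\subseteq KP_M$ is regular, so algebraic extensions of $K$ stay linearly disjoint from $KP_M$. This needs the $P$-part of the domain to be relatively algebraically closed in $P_M$, i.e.\ $\acl_\cL$-closed. Your bases $P_A$, $P_A(c)$ and $\dcl_\cL(P_Ac)$ are not, and the examples above show the conclusion genuinely fails over them.

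The paper sidesteps this by extending $\sigma$ to \emph{all} of $P_M$ before adding any non-$P$ element. It uses coheir for an $\acl_\cL$-basis $X$ of $P_M$ over $P_A$ (your Case 2). It then matches $\acl_\cL$-orbits to reach $\acl_\cL(X\cup P_A)=P_M$, which is what your Case 1 should have been, and then passes to $\dcl_\cL$. Only afterwards does it invoke the extension property. At that point the $P$-part on the $M'$ side is the $\acl_\cL$-closed set $C$, so $P_{A'}\subseteq P_{M'}$ is regular and Step 5 goes through.

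A one-point forth argument could presumably be repaired, with three changes:
\begin{enumerate}
\item close under $\dcl_\cL$ and the $\cL_D$-functions together before splitting into stages;
\item treat elements of $\acl_\cL(P_A)\cap P_M$ as genuine extension steps;
\item use a criterion valid over non-$\acl$-closed bases, e.g.\ Lang's theorem that a regular extension $K/E$ algebraically free from $P_M$ over $E$ is linearly disjoint from $P_M$ over $E$, applied with $E=K\cap P_M$.
\end{enumerate}
As written, though, these points are asserted, some incorrectly, rather than proved.
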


\begin{proof} We use Shoenfield's criterion for quantifier elimination. Let $\M=(M,P_M)$ and $\M'=(M',P_{M'})$ be two models of $T_D$ where $|M|=|T|=\kappa$ and $\M'$ is $\kappa^+$-saturated. In particular, by Theorem \ref{thm:lovely-pairs}, $\M'$ is a $\kappa^+$-lovely pair of models of $T$. Let $\A=(A,P_A)\leqslant_{\cL_D} \M$ and $\A'=(A',P_{A'})\leqslant_{\cL_D} \M'$ be $\cL_{D}$-substructures and $\sigma\colon A\to A'$ be an $\cL_D$-isomorphism. We need to extend $\sigma$ to an $\cL_D$-embedding from $\M$ to $\M'$. We will proceed by extending $\sigma$ as indicated in the following steps: 

\medskip

\emph{Step 1:} Let $X=\{e_\alpha\in P_M : \alpha<\kappa\}$ be a maximal $\acl_\cL$-independent set over $P_A$. Letting $A_{\alpha} = A\cup \{e_\beta: \beta<\alpha\}$ for each $\alpha<\kappa$, we build by induction a nested sequence of partial $\cL$-isomorphisms $\sigma_\alpha\colon A_\alpha\to M'$ sending $e_{\beta}\to e_{\beta}'$ where $e'_{\beta}\in P_{M'}$ for each $\beta<\alpha$. Set $\sigma_0 = \sigma$. Assume that $\sigma_\alpha$ has been defined and let $A_\alpha'$ be the image of $\sigma_\alpha$. Consider the type
\[	
q(z) = \{\varphi(z,\sigma_\alpha(a)) : M \models \varphi(e_{\alpha}, a), \varphi \text{ an $\cL$-formula, } a \text{ a tuple in } A_\alpha \}. 
\] 
Note that $q\in S_\cL(A_\alpha')$. Indeed, maximality is clear and consistency follows from the fact that $\sigma_\alpha$ is a partial $\cL$-elementary embedding (which follows by quantifier elimination of $T$). We show that $q$ is free over $P_{A_\alpha'}$. For suppose it is not and let $\varphi(z,\sigma_\alpha(a))$ be a formula such that $M'\models \exists^{\leqslant n} z \varphi(z, \sigma_\alpha(a))$ with $a$ a tuple in $A_\alpha$. Then, since $\sigma_\alpha$ is partial $\cL$-elementary, we have that $M\models \exists^{\leqslant n} z \varphi(z, a)$. Since $M \models \varphi(e_{\alpha}, a)$, by the strongly geometric assumption, we obtain that $e_{\alpha}$ is algebraic over $A_\alpha$. Since $\A\leqslant_{\cL_D} \M$, by Facts \ref{fact.disln} and \ref{fact:substructure} we have the following implications
\[
A  \ldfree{P_A} P_M \Rightarrow A\acl_\cL(P_{A_\alpha})  \ldfree{\acl_\cL(P_{A_\alpha})} P_M \Rightarrow A\acl_\cL(P_{A_\alpha})  \algfree{\acl_\cL(P_{A_\alpha})} P_M. 
\]
However, the last part contradicts that $e_\alpha$ is algebraic over $A_\alpha$ since $e_\alpha\notin\acl_\cL(P_{A_\alpha})$ and $A_\alpha\subseteq A\acl_\cL(P_{A_\alpha})$. Therefore, by the coheir property of lovely pairs, we define $e_{\alpha}'\in P_{M'}$ as some (any) element realizing $q$. By the definition of $q$, $\sigma_{\alpha+1}$ is a partial $\cL$-isomorphism preserving $P$. For $\alpha$ a limit ordinal, one defines $\sigma_\alpha$ as the union of all $\sigma_\beta$ for $\beta<\alpha$. Since the union of partial isomorphism is a partial isomorphism, we are done.

\medskip

\emph{Step 2:} Call $\sigma\colon A\cup X\to A'\cup X'$ be the map obtained in Step 1 where $X'=\{e_\alpha' : \alpha<\kappa\}$. We extend $\sigma$ to a partial $\cL$-isomorphism $\sigma' \colon A\cup P_M \to A' \cup \acl_\cL(X')$. Note that $P_M=\acl_\cL(X\cup P_A)$, so we only need to extend the map to elements in $B = \acl_\cL(X\cup P_A)\setminus (X\cup P_A)$. For each element $b\in B$, let $\psi_b(z)$ be an $\cL(X\cup P_A)$-formula for which $\psi_b(b)$ holds and $|\psi_b(P_M)|$ is minimal. For $b\in B$, the set $\psi_b(P_M)$ is unique, and since $\sigma$ is a partial $\cL$-elementary map, it is in bijection with $\psi_b^\sigma(P_{M'})$ (where $\psi_b^\sigma$ is the formula obtained after applying $\sigma$ to the parameters of $\psi_b$). We extend $\sigma$ to some (any) bijection $\sigma'$ sending, for each $b\in B$, the $\acl_\cL$-orbit $\psi_b(P_M)$ to the $\acl_\cL$-orbit $\psi_b^\sigma(P_{M'})$. The map $\sigma'$ clearly preserves the predicate, so we only need to show that it is a partial $\cL$-isomorphism. This is a standard argument. However, since it will later reappear, we give it once for the reader's convenience. Reasoning by contradiction, let $\varphi(x,y,z)$ be an $\cL$-formula such that 
\[
M\models \varphi(b, a, e) \hspace{1cm} \text{ and } \hspace{1cm} M'\models \neg\varphi(b' ,\sigma(a),\sigma(e))
\]  
where $b=(b_1,\ldots, b_n)\in B^n$, $a$ is a tuple in $A\setminus (X\cup P_A)$, $e$ a tuple in $X\cup P_A$ and $b'$ the image of $b$ under $\sigma'$. Possibly enlarging the tuple $e$, let $\theta(b,a,e)$ be the formula
\[
\varphi(b, a, e) \wedge \bigwedge_{i=1}^n \psi_{b_i}(b_i),  
\]  
which holds in $M$. Since $\sigma$ is $\cL$-elementary, we have that $M\models \exists z \theta(z, a, e)$ implies $M'\models \exists z \theta(z,\sigma(a),\sigma(e))$. Thus
\[
M'\models \neg \varphi(b', \sigma(a), \sigma(e)) \wedge \bigwedge_{i=1}^n \psi_{b_i}(b_i') \wedge \exists z \theta(z,\sigma(a),\sigma(e)), 
\]  
which contradicts the minimality of $|\psi_{b_1}(P_{M'})|$. Therefore, $\sigma'$ is a partial $\cL$-isomorphism.

\medskip

\emph{Step 3:} Call $\sigma\colon A\cup P_M \to A'\cup C$ the map from Step 2 where $C$ is the image of $P_M$ under $\sigma$. By the definition of $C$ and the transitivity of linear disjointness we have that 
\[
A\cup P_M \ldfree{P_M} P_M \hspace{1cm} A'\cup C \ldfree{C} P_{M'}.
\]
Therefore, by Fact \ref{fact:substructure}, $\sigma$ is a partial $\cL_D$-isomorphism. We extend $\sigma$ to the unique partial $\cL_D$-isomorphism $\sigma'\colon \dcl_\cL(A\cup P_M)\to \dcl_\cL(A'\cup C)$ (alternatively, one can take  $\dcl_{\mathcal{L}_D}$).  

\

\emph{Step 4:} By Step 3, we are in a situation where $\sigma\colon \A\to\A'$ is an $\cL_D$-isomorphism between $\cL_D$-substructures and $P_A=P_M$. Let $Y= \{b_\alpha : \alpha<\lambda\leqslant\kappa\}$ be an $\acl_\cL$-independent subset of $M$ over $A$. Similarly to Step 2, we extend $\sigma$ to a partial $\cL$-isomorphism preserving the predicate by constructing a nested sequence of partial $\cL$-isomorphisms $\sigma_\alpha\colon A_\alpha \to A_\alpha'$ where $A_\alpha = A\cup \{b_\beta: \beta<\alpha\}$ setting $\sigma_0=\sigma$. Suppose that $\sigma_\alpha$ has been defined. Consider the type  
\[	
q(z) = \{\varphi(z,\sigma_\alpha(a)) : M \models \varphi(b_{\alpha}, a), \varphi \text{ an $\cL$-formula, } a \text{ a tuple in } A_\alpha \}. 
\] 
As in Step 1, note that $q\in S_\cL(A_\alpha')$. Hence, by the extension property of lovely pairs, there is $b_{\alpha}'\in M'$ realizing $q$ such that $\tp(b_{\alpha}' / A_\alpha'\cup P_{M'})$ is free over $A_\alpha'$. Note that since $b_{\alpha}'\in M'$ realizes $q$, we have that $b_{\alpha}' \notin \acl_\cL(A_\alpha')$ (same argument as in Step 1). Therefore, by freeness, $b_{\alpha}'\notin \acl_\cL(A_\alpha'\cup P_{M'})$. Extending $\sigma_\alpha$ by sending $b_{\alpha}$ to $b_{\alpha}'$ gives a partial $\cL$-isomorphism preserving $P$. For $\alpha$ a limit ordinal we just take unions. Abusing of notation, let $\sigma\colon A\cup Y\to A'\cup Y'$ denote the map we just constructed where $Y'$ is the image of $Y$. Note that $A'\cup Y'$ is linearly disjoint from $P_{M'}$ over $P_{A'}$. Indeed, suppose $D:=\{a_1,\ldots, a_m, b_1',\ldots, b_n'\}$ is linearly dependent over $P_{M'}$, where $a_1,\ldots, a_m\in A'$ and $b_1',\ldots, b_n'\in Y'$. Then, we must have that $n=0$, otherwise we contradict that $Y'$ is $\acl_\cL$-independent over $A'$ (defined as such by construction). Since there are no $b_i'$, the set $D$ is contained in $A'$ and is linearly dependent over $P_{M'}$, so it is over $P_{A'}$ by Fact \ref{fact:substructure}.  

\medskip
\emph{Step 5:} Let $\sigma\colon A\cup Y\to A'\cup Y'$ be the map from Step 4. We extend $\sigma$ to a partial $\cL$-isomorphism $\sigma'$ from $M=\acl_\cL(A\cup Y)$ to $\acl_\cL(A'\cup Y')$. Again, we do so by sending $\acl_\cL$-orbits to $\acl_\cL$-orbits bijectively as in Step 2. Let us show this must preserve the predicate and that $(\acl_\cL(A'\cup Y'), P_{\acl_{\cL}(A'\cup Y')})$ is an $\cL_D$-substructure.\\
For the former, by Step 4, $A'\cup  Y'$ is $\acl_\cL$-independent from $P_{M'}$ over $P_{A'}$. Since $P_{A'}$ is $\acl_\cL$-closed, we have that $\acl_\cL(A'\cup Y')\cap P_{M'}=P_{A'}$, otherwise  there is $e \in \acl_{\cL}(A'\cup Y')\cap P_{M'}\setminus P_{A'}$,  so by Fact \ref{fact:substructure} the element $e$ must be independent over $A'\cup Y'$. Thus, every partial $\cL$-isomorphism from $\acl_{\cL}(A\cup Y)$ to $\acl_{\cL}(A'\cup Y')$ extending $\sigma$  preserves the predicate.\\
It remains to show that $\acl_{\cL}(A'\cup Y') \ldfree{P_{A'}} P_{M'}$. By quantifier elimination in $T$, $P_{A'}\preccurlyeq_\cL  P_{M'}$, so Fact \ref{fact.disln} applies and $ P_{A'}\subset P_{M'}$ is regular. From Step 4 we have that $A'(Y') \ldfree{P_{A'}} P_{M'}$, implying that the extension $A'(Y')\subseteq A'(Y')P_{M'}$ is also regular. Using Fact \ref{fact.disln}, the regularity of the extension $ A'(Y')\subseteq A'(Y')P_{M'}$ and the algebraicity of the extension $ A'(Y')\subseteq \acl_{\cL}(A'(Y'))$ (here we use that $T$ is strongly geometric) yields 
\[
\acl_{\cL}(A'(Y')) \ldfree{A'(Y')} A'(Y')P_{M'}.
\]
Applying transitivity we obtain that $\acl_{\cL}(A'\cup Y') \ldfree{P_{A'}} P_{M'}$.

We conclude by Fact \ref{fact:substructure}. 
\end{proof}

The following corollary provides examples of our main result. Points (1) and (3) generalize Delon's results from \cite{delon-pairs}. 

\begin{corollary}\label{cor:geoFields}
Let $T$ be one of the following $\cL$-theories ($\cL$ depending on $T$): 
\begin{enumerate}
    \item $\ACF_p$ in the language of rings/fields;
    \item $\RCF$ in the language of ordered rings/fields;
    \item any completion of $\ACVF$ in any natural one-sorted language of valued fields;
    \item the theory of a henselian field of characteristic 0 in a definitional expansion of the language of rings with quantifier elimination;
    \item the theory of a $p$-adically closed valued field or of a field of Laurent series such as $\R(\hspace{-.075cm}(t)\hspace{-.075cm})$ and $\mathbb{C}(\hspace{-.075cm}(t)\hspace{-.075cm})$ (in Macintyre's language with enough constants);
\item the theory of a perfect PAC-field in a definitional expansion of the language of rings with quantifier elimination. 
\end{enumerate}
Then the $\cL_D$-theory $T_D$ of lovely pairs of $T$ has quantifier elimination.  
\end{corollary}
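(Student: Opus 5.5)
The plan is to derive the corollary directly from the Main Theorem. For each listed theory $T$ it suffices to check two hypotheses: $T$ is complete and has quantifier elimination in the stated language, and $T$ is strongly geometric, i.e.\ in every model of $T$ the model-theoretic algebraic closure $\acl_\cL$ coincides with field-theoretic algebraic closure. Once both hold, the Main Theorem says that $T_D$ has quantifier elimination. For cases (3), (4) and (6), where only a completion or a suitable definitional expansion is specified, I would apply the theorem to each completion separately.

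\textbf{Quantifier elimination.} This is classical in each case, at least after passing to the indicated language:
\begin{itemize}
\item $\ACF_p$ (Tarski--Chevalley);
\item $\RCF$ in the ordered field language (Tarski);
\item completions of $\ACVF$ in a one-sorted language with a divisibility predicate (Robinson; Weispfenning in mixed characteristic);
\item $p$-adically closed fields and fields like $\R(\!(t)\!)$, $\mathbb C(\!(t)\!)$ in Macintyre's language with the constants needed to make the theory complete and eliminate quantifiers (Macintyre; Prestel--Roquette for $p$-adically closed fields of finite $p$-rank; Basarab/Delon/Pas-type results for $\R(\!(t)\!)$, $\mathbb C(\!(t)\!)$, where the residue field has QE-friendly power predicates);
\item in (4) and (6), quantifier elimination is part of the hypothesis.
\end{itemize}
Since definitional expansions by constants or $\emptyset$-definable predicates do not change $\acl$ or the notion of lovely pair, the strong geometric condition can be checked in any convenient language.

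\textbf{Strong geometricity.} I would invoke known results.
\begin{itemize}
\item For $\ACF_p$ and $\RCF$: in $\ACF$, $\acl$ is field-theoretic relative algebraic closure. In $\RCF$, o-minimality gives $\dcl=\acl$, and definable closure is the real closure, which is contained in the relative algebraic closure.
\item For henselian valued fields of characteristic $0$, covering (3), (4) and (5): $\acl$ equals field-theoretic algebraic closure in all models. This is the result attributed to Junker--Koenigsmann and Johnson--Ye cited in the introduction, where henselian fields of characteristic $0$ are listed as examples of very slim fields. I would state this as the imported fact, since the property depends only on the valued field structure up to definitional expansion.
\item For perfect PAC fields, case (6): these are also listed in the introduction as very slim (Junker--Koenigsmann), so I would quote that.
\end{itemize}

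The main obstacle is bookkeeping rather than mathematics. One must confirm that the languages in (3) and (5) genuinely have quantifier elimination for the complete theories considered, and that the cited very-slimness results apply uniformly to all models of the theory, not just the standard one. If a cited result is stated only for a particular field, I would use that very slimness is elementary in an appropriate sense (Junker--Koenigsmann show it is a property of the theory) to transfer it to all models. The rest is a direct application of the Main Theorem.
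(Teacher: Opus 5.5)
Your overall route is the paper's: apply the Main Theorem after checking, for each $T$, completeness, quantifier elimination and strong geometricity, with Junker--Koenigsmann's theorem (henselian fields of characteristic $0$ are very slim) as the main imported input. The gap is in item (3). You place $\ACVF$ under ``henselian valued fields of characteristic $0$'' and justify the change of language by saying the property ``depends only on the valued field structure up to definitional expansion''. This fails on two counts.

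\begin{itemize}
\item The completions $\ACVF_{p,p}$ have characteristic $p$, so the characteristic-$0$ result does not apply to them at all.
\item Even for $\ACVF_{0,0}$ and $\ACVF_{0,p}$, the valued-field language is \emph{not} a definitional expansion of the ring language. The ring reduct is $\ACF$, which is strongly minimal, so the infinite, co-infinite valuation ring is not definable there, even with parameters.
\end{itemize}

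Junker--Koenigsmann's theorem is a statement about $\acl$ in the ring language. For an algebraically closed field it only gives the trivial fact that $\ACF$ is very slim. It says nothing about $\acl$ once the divisibility predicate is added, and a priori $\acl$ could grow there. This language change is exactly what the paper's extra remark handles: by relative quantifier elimination, henselian valued fields of equicharacteristic $0$ remain strongly geometric in any natural valued-field language.

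To cover every completion of $\ACVF$, use the classical fact that in $\ACVF$, in any characteristic, model-theoretic $\acl$ in the field sort equals field-theoretic algebraic closure. This follows from quantifier elimination and C-minimality.

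For (5) you should likewise say why Macintyre's language is harmless. For $p$-adically closed fields, and also for $\R(\!(t)\!)$ and $\mathbb{C}(\!(t)\!)$, the valuation ring and the power predicates are $\emptyset$-definable in the ring language. Your remark about constants only covers $\emptyset$-definable (or at least $\acl(\emptyset)$) constants. A constant such as $t$ in $\mathbb{C}(\!(t)\!)$ is not in $\acl(\emptyset)$, and naming it does change $\acl$ and the class of lovely pairs. Strong geometricity is still preserved, since $\acl$ over $B$ becomes $\acl$ over $B\cup\{t\}$ and the generated field changes in the same way. That is all you need, because the Main Theorem is applied to $T$ in the stated language.
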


\begin{proof}
The result follows directly from Theorem \ref{thm:main} after noticing that such fields are strongly geometric and have quantifier elimination in the given language. That henselian fields of characteristic 0 are strongly geometric (in the language of rings) follows from \cite[Theorem 5.5]{JK}. Note that by relative quantifier elimination, any henselian valued field of equicharacteristic 0 is strongly geometric in any natural language of valued fields.  
\end{proof}

We finish with the following question: 

\begin{question}
Can our Main Theorem be extended to the context of Mordell-Lang pairs as defined in \cite{BHK}?    
\end{question}

\bibliographystyle{babplain}
\bibliography{ref}

\end{document}